\documentclass[11pt,reqno, final]{amsart}

\usepackage{amsmath,amssymb,amsthm,bbm,bm}
\usepackage{color}
\usepackage[colorlinks=true, allcolors=blue]{hyperref}
\usepackage{mathrsfs}
\usepackage{mathtools}

\usepackage[noabbrev,capitalize,nameinlink]{cleveref}
\crefname{equation}{}{}
\usepackage[noadjust]{cite}

\usepackage{fullpage}

\usepackage{graphics}
\usepackage{pifont}
\usepackage[T1]{fontenc}
\usepackage{tikz}
\usetikzlibrary{arrows.meta}
\usepackage{environ}
\usepackage{framed}
\usepackage{url}
\usepackage[linesnumbered,ruled,vlined]{algorithm2e}
\usepackage[noend]{algpseudocode}
\usepackage[labelfont=bf]{caption}
\usepackage{cite}
\usepackage{framed}
\usepackage[framemethod=tikz]{mdframed}
\usepackage{appendix}
\usepackage{graphicx}
\usepackage[textsize=tiny]{todonotes}
\usepackage{tcolorbox}
\usepackage{enumerate}
\allowdisplaybreaks[1]
\usepackage{enumerate}

\crefname{algocf}{Algorithm}{Algorithms}

\crefname{equation}{}{} %remove ``Equation''
\AtBeginEnvironment{appendices}{\crefalias{section}{appendix}} %appendices

\usepackage[color]{showkeys} %add in 'final' into parameter to remove showkeys
%\renewcommand\showkeyslabelformat[1]{\scalebox{.8}{\normalfont\footnotesize\TTtfamily#1}\hspace{-.5em}}
% showkeys font
\colorlet{refkey}{orange!20}
\colorlet{labelkey}{blue!30}

\crefname{algocf}{Algorithm}{Algorithms}

% ------   Theorem Styles -------
\numberwithin{equation}{section}
\newtheorem{theorem}{Theorem}[section]
\newtheorem{proposition}[theorem]{Proposition}
\newtheorem{lemma}[theorem]{Lemma}

\crefname{claim}{Claim}{Claims}

\newtheorem*{question*}{Question}

\theoremstyle{definition}

\newtheorem*{definition*}{Definition}

\theoremstyle{remark}
\newtheorem*{remark}{Remark}

% ----- Delimiters ----

\newcommand{\snorm}[1]{\lVert#1\rVert}

\newcommand{\mb}{\mathbb}

\newcommand{\mbm}{\mathbbm}
\newcommand{\mc}{\mathcal}
\newcommand{\mf}{\mathfrak}
\newcommand{\mr}{\mathrm}

\newcommand{\on}{\operatorname}

\renewcommand{\epsilon}{\varepsilon}

\allowdisplaybreaks

\title{Optimal minimization of the covariance loss}
\author[Jain]{Vishesh Jain}
\address{Department of Statistics, Stanford University}
\email{visheshj@stanford.edu}

\author[Sah]{Ashwin Sah}
\author[Sawhney]{Mehtaab Sawhney}

\address{Department of Mathematics, Massachusetts Institute of Technology}
\email{\{asah,msawhney\}@mit.edu}

\thanks{Sah and Sawhney were supported by NSF Graduate Research Fellowship Program DGE-1745302. Sah was supported by the PD Soros Fellowship. }

\begin{document}

\begin{abstract}
Let $X$ be a random vector valued in $\mathbb{R}^{m}$ such that $\|X\|_{2} \le 1$ almost surely. For every $k\ge 3$, we show that there exists a sigma algebra $\mathcal{F}$ generated by a partition of $\mathbb{R}^{m}$ into $k$ sets such that
    \[\|\operatorname{Cov}(X) - \operatorname{Cov}(\mb{E}[X\mid\mathcal{F}])
    \|_{\mathrm{F}} \lesssim \frac{1}{\sqrt{\log{k}}}.\]
This is optimal up to the implicit constant and improves on a previous bound due to Boedihardjo, Strohmer, and Vershynin.

Our proof provides an efficient algorithm for constructing $\mc{F}$ and leads to improved accuracy guarantees for $k$-anonymous or differentially private synthetic data. We also establish a connection between the above problem of minimizing the covariance loss and the pinning lemma from statistical physics, providing an alternate (and much simpler) algorithmic proof in the important case when $X \in \{\pm 1\}^m/\sqrt{m}$ almost surely.  
\end{abstract}

\maketitle

\section{Introduction}\label{sec:intro}

Let $X$ be a random vector valued in $\mb{R}^{m}$. By slightly abusing notation, we identify $X$ with its law, which is a probability measure on $(\mb{R}^{m}, \mc{G})$, where $\mc{G}$ is a sigma-algebra on $\mb{R}^{m}$. Let $\mc{F}$ be a sigma sub-algebra of $\mc{G}$ and let $Y = \mb{E}[X \mid \mc{F}]$ denote the corresponding conditional expectation. In particular, $\mb{E}[X] = \mb{E}[Y]$. Let 
\[\Sigma_{X} := \mb{E}[(X-\mb{E}X)(X-\mb{E}X)^{T}]\]
denote the covariance matrix of $X$ and let $\Sigma_{Y}$ denote the covariance matrix of $Y$. When $m=1$, $\Sigma_X$ is precisely the variance of $X$, which we denote by $\on{Var}(X)$, and similarly for $\Sigma_Y$. The familiar law of total variance asserts that
\[\on{Var}(X) - \on{Var}(Y) = \mb{E}(X-Y)^{2} \geq 0,\]
so that taking a conditional expectation results in a loss of variance. This phenomenon extends to higher dimensions as the law of total covariance:   
\begin{align}
\label{eqn:law-covariance}
    \Sigma_{X} - \Sigma_{Y} = \mb{E}(X-Y)(X-Y)^{T} \succeq 0,
\end{align}
where $\succeq$ denotes the usual Loewner order on positive semi-definite matrices. 

Recently, motivated by the design of privacy-preserving synthetic data (see the discussion in \cref{sec:application}), Boedihardjo, Strohmer, and Vershynin \cite{BSV21} asked the following fundamental question: \emph{how much covariance is lost upon taking a conditional expectation?} The answer to this clearly depends on the sigma sub-algebra $\mc{F}$ (for instance, the choice $\mc{F} = \mc{G}$ loses no covariance, whereas the trivial sigma sub-algebra $\mc{F} = \{\emptyset, \mb{R}^{m}\}$ leads to the maximum possible covariance loss of $\Sigma_X$). This suggests restricting the `complexity' of the sigma sub-algebra $\mc{F}$ and investigating how much covariance is necessarily lost upon taking a conditional expectation with respect to a sigma sub-algebra $\mc{F}$ with a given complexity. Moreover, for applications, one would like to be able to find the best possible (at least asymptotically) sigma sub-algebra with a given complexity in an efficient manner. 

Since every finitely generated sigma-algebra $\mc{F}$ may be viewed as the sigma-algebra generated by a partition of $\mb{R}^{m}$ into $k$ sets (for some finite $k$), a natural and useful measure of complexity of $\mc{F}$ is the number of sets in the underlying partition, $k$. With this notion of complexity, and measuring covariance loss in the Frobenius norm, Boedihardjo, Strohmer, and Vershynin \cite[Theorem~1.2]{BSV21} showed that there exists an absolute constant $C > 0$ such that for any random vector $X$ valued in $\mb{R}^{m}$ for which $\|X\|_{2} \leq 1$ almost surely, and for every $k \geq 3$, there exists a partition of $\mb{R}^{m}$ into at most $k$ sets such that for the sigma-algebra $\mc{F}$ generated by this partition, $Y = \mb{E}[X \mid \mc{F}]$ satisfies the dimension-independent bound
\begin{align}
\label{eqn:old-bound}
    \|\Sigma_X - \Sigma_Y\|_{\mr{F}} \leq C\sqrt{\frac{\log\log{k}}{\log k}},
\end{align}
where for $A \in \mb{R}^{m\times m}$, $\|A\|_{\mr{F}} = \sqrt{\sum_{i,j}A_{ij}^{2}}$ denotes its Frobenius norm. They noted \cite[Proposition~3.14]{BSV21} that the upper bound is optimal up to the factor of $\sqrt{\log\log{k}}$.

Note that in the case when $X$ is the uniform distribution over $x_1,\dots, x_n \in \mb{R}^{m}$ with $\max_{i}\|x_i\|_{2}\leq 1$, and $\mc{F}$ is generated by a partition into $k$ sets, the dimension-independence of \cref{eqn:old-bound} stands in stark-contrast to (a variation of) the $k$-means objective 
\begin{align*}
\inf_{y_1,\dots, y_k \in \mb{R}^{m}, I_1 \sqcup \dots \sqcup I_{k} = [n]}\sum_{i=1}^{k}\sum_{j \in I_i}\|x_j - y_i\|_{2},
\end{align*}
which bounds $\inf_{\mc{F}}\|\Sigma_{X} - \Sigma_{Y}\|_{\mr{F}}$ from above (via a direct application of Jensen's inequality) and, in general, can decay as slowly as $\Omega(k^{-1/m})$, which is significantly worse in the high-dimensional regime of interest here.

As our main result, we remove the gap between the upper bound in \cref{thm:main} and the lower bound in \cite[Proposition~3.14]{BSV21}, thereby obtaining an optimal and algorithmic answer to the problem of minimizing covariance loss raised by Boedihardjo, Strohmer, and Vershynin.     

\begin{theorem}\label{thm:main}
Let $X$ be a random vector valued in $\mb{R}^m$ which satisfies $\snorm{X}_2\le 1$ almost surely. Then for every $k\ge 3$, there exists a partition of $\mb{R}^{m}$ into at most $k$ sets such that for the associated $\sigma$-algebra $\mc{F}$, the conditional expectation $Y = \mb{E}[X\mid \mc{F}]$ satisfies 
\[\snorm{\Sigma_{X} - \Sigma_{Y}}_{\mr{F}}\leq \frac{C}{\sqrt{\log k}},\]
where $C$ is an absolute constant. 
\end{theorem}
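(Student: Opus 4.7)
The plan is to build the partition iteratively by repeated doubling along the current principal direction of the residual covariance, tracking the decrease of $\snorm{\Sigma_{X} - \Sigma_{Y}}_{\mr{F}}^{2}$ as a potential. Let $\mc{F}_{0} = \{\mb{R}^{m}\}$; at stage $t$, set $Y_{t} = \mb{E}[X \mid \mc{F}_{t}]$ and $M_{t} = \Sigma_{X} - \Sigma_{Y_{t}} = \mb{E}[\on{Cov}(X \mid \mc{F}_{t})] \succeq 0$, and let $u_{t} \in \mb{R}^m$ be a unit top eigenvector of $M_{t}$ with eigenvalue $\lambda_{t} = \snorm{M_{t}}_{\on{op}}$. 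To form $\mc{F}_{t+1}$, split each atom $A$ of $\mc{F}_{t}$ by a threshold on the scalar random variable $u_{t}^{T} X \mid X \in A$ chosen to approximately maximize the resulting between-group variance of $u_{t}^{T} X$ within $A$. Then $|\mc{F}_{t+1}| \le 2|\mc{F}_{t}|$, so after $t_{*} = \lfloor \log_{2} k \rfloor$ rounds the partition has at most $k$ parts.

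Write $a_{t} := \snorm{M_{t}}_{\mr{F}}^{2}$; the goal is the one-step recursion $a_{t+1} \le a_{t} - c\, a_{t}^{2}$ for an absolute $c > 0$, since this telescopes (through $1/a_{t+1} \ge 1/a_{t} + c$) to $a_{t} \le 1/(ct)$ and yields $\snorm{\Sigma_{X} - \Sigma_{Y_{t_{*}}}}_{\mr{F}} \lesssim 1/\sqrt{\log k}$. Set $\Delta := M_{t} - M_{t+1}$. The law of total covariance gives $0 \preceq \Delta \preceq M_{t}$, and since both $\Delta$ and $M_{t+1}$ are PSD, $\on{tr}(\Delta M_{t+1}) \ge 0$, i.e.\ $\on{tr}(\Delta^{2}) \le \on{tr}(\Delta M_{t})$. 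Expanding $\snorm{M_{t} - \Delta}_{\mr{F}}^{2}$ then yields
\[ a_{t} - a_{t+1} \;=\; 2\on{tr}(\Delta M_{t}) - \on{tr}(\Delta^{2}) \;\ge\; \on{tr}(\Delta M_{t}). \]
Since $M_{t} \succeq \lambda_{t}\, u_{t} u_{t}^{T}$ and $\Delta \succeq 0$, one has $\on{tr}(\Delta M_{t}) \ge \lambda_{t}\, u_{t}^{T} \Delta u_{t}$, and $u_{t}^{T} \Delta u_{t}$ is exactly the decrease in $u_{t}^{T} M_{t} u_{t} = \mb{E}[\on{Var}(u_{t}^{T} X \mid \mc{F}_{t})]$ under the refinement. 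Finally, $\snorm{X}_{2} \le 1$ gives $\on{tr}(M_{t}) = \mb{E}\snorm{X - Y_{t}}_{2}^{2} \le 1$, so $a_{t} = \snorm{M_{t}}_{\mr{F}}^{2} \le \lambda_{t}\, \on{tr}(M_{t}) \le \lambda_{t}$; this amplification from operator norm to Frobenius is the key linear-algebraic input.

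The main obstacle is the remaining one-dimensional input: for any real random variable $Z$ with bounded support and variance $\sigma^{2}$, some threshold $\tau$ produces probabilities $p, 1-p$ and conditional means $\mu_{\pm}$ with $p(1-p)(\mu_{+}-\mu_{-})^{2} \ge c_{0}\sigma^{2}$ for an absolute $c_{0} > 0$. Granting this lemma and applying it to $u_{t}^{T} X \mid X \in A$ within every atom of $\mc{F}_{t}$, averaged with weights $\pi_{A} = \mb{P}(X \in A)$, the decrease $u_{t}^{T} \Delta u_{t}$ is at least $c_{0}\cdot \mb{E}[\on{Var}(u_{t}^{T} X \mid \mc{F}_{t})] = c_{0}\lambda_{t}$. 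Substituting into the display above gives $a_{t} - a_{t+1} \ge c_{0}\lambda_{t}^{2} \ge c_{0}\, a_{t}^{2}$, completing the recursion. I expect to prove the one-dimensional lemma by direct analysis of the quantile function of $Z$: if the split at the mean already captures a constant fraction of the variance we are done, and otherwise the conditional variance on one side must concentrate on a small-probability tail, allowing a different quantile-based threshold to succeed by a short case analysis.
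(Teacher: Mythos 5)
The linear-algebraic skeleton of your recursion is sound: $a_t - a_{t+1} \ge \on{tr}(\Delta M_t) \ge \lambda_t\, u_t^{T}\Delta u_t$, the identification of $u_t^T\Delta u_t$ with the drop in $\mb{E}[\on{Var}(u_t^TX\mid\mc F_t)]$, and $a_t \le \lambda_t\on{tr}(M_t)\le\lambda_t$ are all correct. The gap is the one-dimensional lemma you defer to the end, and it is not a technicality: the lemma is \emph{false}, and the distributions that falsify it are exactly the hard instances of this problem. Take $Z$ supported on $\{2^{-i}: 0\le i\le J\}$ with $\mb{P}[Z=2^{-i}]=4^{i}/N$, $N=\sum_{i=0}^{J}4^{i}$, so $|Z|\le 1$. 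Each atom contributes $1/N$ to $\mb{E}[Z^2]$, and $\mu\le 2^{J+1}/N$ gives $\mu^2\le 4/N$, so $\on{Var}(Z)\ge (J-3)/N$. Any threshold split has upper part $\{Z\ge 2^{-i_0}\}$ with $q=\mb{P}[Z\ge 2^{-i_0}]\in[4^{i_0}/N,\,1/3]$, and its between-group variance is
\[
\frac{\bigl(\mb{E}[(Z-\mu)\mbm{1}_{Z\ge 2^{-i_0}}]\bigr)^2}{q(1-q)}\;\le\;\frac{(2^{i_0+1}/N)^2}{\tfrac34\cdot 4^{i_0}/N}\;\le\;\frac{6}{N},
\]
i.e.\ at most a $O(1/J)$ fraction of $\on{Var}(Z)$, and since the optimal two-part split of a one-dimensional distribution (for maximizing between-group variance) is always an interval split, no bipartition of the atom does better. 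So no absolute constant $c_0$ exists, boundedness of the support notwithstanding (the statement is scale-invariant).

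This breaks the recursion at its heart rather than at the margins. If the per-round gain within an atom can be as small as a $1/J$ fraction of the conditional variance with $J\sim\log k$ scales, the best you can feed into your potential argument is $a_t-a_{t+1}\gtrsim a_t^2/\log k$, which after $\log_2 k$ doublings yields only $a_{t_*}=O(1)$; weaker salvage attempts (e.g.\ a gain of order $\sigma^4$ for $|Z|\le1$) give at best $(\log k)^{-1/4}$, short of the target and of the known $\sqrt{\log\log k/\log k}$ bound. These multiscale measures are essentially the extremal examples showing the rate $1/\sqrt{\log k}$ cannot be beaten, so any proof must confront them head-on; a greedy scheme that doubles the partition once per round and measures progress only along one direction cannot make constant-factor progress on them. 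The paper avoids this by not iterating at all: it first reduces to dimension $p=c\log k$ by PCA (\cref{lem:PCA}), fixes a single spatial scale $\gamma\approx e^{-1/(4c)}/\sqrt{c\log k}$, and spends the whole budget of $k$ parts in one shot --- isolating heavy atoms, merging collectively light cubes, and clustering the remaining points within each cube by independent randomized rounding to $\{\pm 3\gamma/2\}^p$ (\cref{prop:key-estimate}), which handles all scales simultaneously. If you want to rescue your approach, you would need a per-round guarantee that degrades gracefully with the number of scales and a budget schedule that spends many more than two parts on multiscale atoms; as written, the proposal does not prove the theorem.
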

As noted earlier, our bound is optimal up to the value of the absolute constant $C$. We prove \cref{thm:main} in \cref{sec:proof-general}. Before doing so, in \cref{sec:proof-boolean}, we provide a completely different proof of \cref{thm:main} in the case when $X \in \frac{1}{\sqrt{m}}\cdot \{\pm 1\}^{m}$ based on the pinning lemma from statistical physics; this case is especially important for applications, since it corresponds to the case of Boolean `true' data in the setting of \cref{sec:application}. The proof in \cref{sec:proof-boolean} is much simpler than the general proof in \cref{sec:proof-general} and provides a significantly faster and simpler algorithm for finding $\mc{F}$.  
\begin{remark}
\label{rmk:moreover}
By following exactly the same procedure as in \cite[Section~3.6]{BSV21}, if the probability space has no atoms, then the partition can be made with exactly $k$ sets, all of which have the same probability $1/k$.
\end{remark}

\begin{remark}
By combining \cref{thm:main} with the tensorization principle \cite[Theorem~3.10]{BSV21}, we immediately obtain an analog of \cref{thm:main} for higher moments, which improves \cite[Corollary~3.12]{BSV21} by a factor of $\sqrt{\log\log{k}}$: for all $d\geq 2$,
\begin{align}
\label{eq:higher-order}
\|\mb{E}X^{\otimes d} - \mb{E}Y^{\otimes d}\|_{\mr{F}} \leq 4^{d}\cdot \frac{C}{\sqrt{\log{k}}},
\end{align}
where $C$ is the absolute constant appearing in \cref{thm:main}. Here, $X^{\otimes d} \in \mb{R}^{m \times m \dots \times m}$ is defined by $X^{\otimes d}(i_1,\dots, i_d) := X(i_1)\cdots X(i_d)$, where $i_1,\dots, i_d \in [m]$ (and similarly for $Y^{\otimes d}$), and for $A \in \mb{R}^{m\times \dots m}$, $\|A\|_{\mr{F}} := \sqrt{\sum_{i_1,\dots, i_d \in [m]}A(i_1,\dots, i_d)^{2}}$. 
\end{remark}

\subsection{Applications to the design of privacy-preserving synthetic data}
\label{sec:application}
As mentioned earlier, the problem of minimizing covariance loss was studied in \cite{BSV21} with a view towards designing privacy-preserving synthetic data. Here, one is given `true' data points $x_1,\dots, x_n \in \mb{R}^{m}$ and would like to construct a map $\mc{A} : \{x_1,\dots, x_n\} \to \mb{R}^{m}$ such that the set of `synthetic' data $\{\mc{A}(x_1),\dots, \mc{A}(x_n)\}$ is both `private' and `accurate'. We refer the reader to \cite{BSV21} for a much more detailed discussion of these notions and further references, limiting ourselves here to the most basic application of \cref{thm:main}.

A popular notion of preserving privacy is $k$-anonymity \cite{S02}; for synthetic data, this is the requirement that for any $y \in \{\mc{A}(x_1),\dots, \mc{A}(x_n)\}$, the preimage $\mc{A}^{-1}(y)$ has cardinality at least $k$. In words, the true data is transformed into synthetic data in such a manner that the information of each person in the dataset cannot be distinguished from that of at least $k-1$ other individuals in the dataset.

Let us quickly discuss how \cref{thm:main} may be used to obtain accurate $\lfloor n/k \rfloor$-anonymous synthetic data. Given true data $x_1,\dots, x_n \in \mb{R}^{m}$, we consider the random vector $X$ which takes on each value $x_{i}$ with probability $1/n$ each. Given $k\geq 3$, \cref{thm:main} gives a partition of $\mb{R}^{m}$ into $k$ sets, which induces a partition $[n] = I_1 \cup \dots \cup I_{k}$ and a sigma algebra $\mc{F}$ on $\{x_1,\dots, x_n\}$. Moreover, by a slight variation of the remark following \cref{thm:main}, we may assume that $|I_i| \geq \lfloor n/k \rfloor$ for all $i \in [k]$. For $j \in [n]$, let $I(j) \in \{I_1,\dots, I_k\}$ denote the unique subset of $[n]$ such that $j \in I(j)$. Then, the conditional expectation $Y = \mb{E}[X \mid \mc{F}]$ corresponds to the synthetic data map 
\[x_{j} \mapsto y_{I(j)} := \frac{1}{|I(j)|}\sum_{i \in I(j)}x_i.\]
This map is $\lfloor n/k \rfloor$-anonymous, by construction. As for accuracy, it follows from  \cref{thm:main} that, with $Y$ the random vector which takes on each value $y_\ell$ with probability $1/k$, 
\[\|\Sigma_{X} - \Sigma_{Y} \|_{\mr{F}} \lesssim \frac{1}{\sqrt{\log{k}}},\]
so that the synthetic data is accurate in the sense that it approximately preserves, on average, the second order marginals of the true data. This can be extended to higher-order marginals using \cref{eq:higher-order}. 

The above idea is adapted in \cite{BSV21} to extract additional guarantees for anonymous, synthetic data (see \cite[Theorems~4.4, 4.6]{BSV21}). In both cases, replacing \cref{eqn:old-bound} with our \cref{thm:main} leads to quantitative improvements by a factor of $\log\log{k}$.

Finally, we remark that in \cite[Theorems~5.9-5.11]{BSV21}, a generalization of \cref{eqn:old-bound} is used with additional arguments to design differentially-private synthetic data. Our proof of \cref{thm:main} in \cref{sec:proof-general} can also be generalized using similar arguments as in \cite{BSV21} to yield versions of \cite[Theorems~5.9-5.11]{BSV21} without the $\log\log{n}$ factor there; we leave the details to the interested reader.

\section{Proof of \texorpdfstring{\cref{thm:main}}{Theorem 1.1} for Boolean Data}
\label{sec:proof-boolean}

In this section, we provide a proof of \cref{thm:main} in the case when $X$ is valued in $\{\pm 1\}^{m}/\sqrt{m}$ almost surely. In the setting of \cref{sec:application}, this corresponds to the case when the true data is Boolean and hence is particularly relevant for applications. Our proof relies on the so-called pinning lemma from statistical physics, discovered independently by Montanari \cite{Mon08} and by Raghavendra and Tan \cite{RT10}. The statement below follows by combining \cite[Lemma~4.5]{RT10} with Pinsker's inequality (cf.~the proofs of \cite[Lemmas~4.2, A.2]{JKR19}).

\begin{lemma}
\label{lem:pinning}
Let $X_1,\dots, X_m$ be a collection of $\{\pm 1\}$-valued random variables. Then, for any $\ell \in [m]$, we have that
\[\mb{E}_{t \sim \{0,1,\dots,\ell\}}\mb{E}_{S\sim \binom{[m]}{t}}\left[\mb{E}_{X_S}\left(\sum_{i\neq j\in [m]}\on{Cov}(X_i, X_j \mid X_S)^{2}\right)\right] \leq \frac{8m^2\log 2}{\ell}.\]
\end{lemma}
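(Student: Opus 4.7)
The plan is to combine Pinsker's inequality (to pass from conditional covariance to conditional mutual information) with a telescoping entropy argument, which is the heart of the pinning lemma.

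First I would apply Pinsker. For each fixed realization $x_S$ of $X_S$, let $\mu$ be the conditional joint distribution of $(X_i, X_j)$ on $\{\pm 1\}^2$ given $X_S = x_S$ and let $\nu$ be the product of the conditional marginals. Then
\[|\on{Cov}(X_i, X_j \mid X_S = x_S)| = |\mb{E}_\mu[X_i X_j] - \mb{E}_\nu[X_i X_j]| \le 2\snorm{\mu - \nu}_{\mr{TV}} \le \sqrt{2\, I(X_i; X_j \mid X_S = x_S)},\]
the middle inequality being the standard $L^1$ bound for expectations of $\{\pm 1\}$-valued observables and the last being Pinsker. Squaring, summing over $i \ne j$, and averaging over $X_S$ reduces the target to the mutual-information bound
\[\mb{E}_t \mb{E}_S \sum_{i \ne j \in [m]} I(X_i; X_j \mid X_S) \le \frac{4 m^2 \log 2}{\ell}.\]

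Next I would run the telescoping pinning step. Define $F(t) := \mb{E}_{S \sim \binom{[m]}{t}} \sum_{j \in [m]} H(X_j \mid X_S)$; then $F \ge 0$ and $F(0) = \sum_j H(X_j) \le m \log 2$. Writing a size-$(t+1)$ subset as $S \cup \{k\}$ with $k$ uniform in $[m] \setminus S$ and applying the identity $H(X_j \mid X_S) - H(X_j \mid X_S, X_k) = I(X_j; X_k \mid X_S)$ produces
\[F(t) - F(t+1) = \frac{1}{m-t}\,\mb{E}_S \sum_{k \notin S} \sum_{j \in [m]} I(X_j; X_k \mid X_S) \ge \frac{1}{m-t}\,\mb{E}_S \sum_{i \ne j \in [m]} I(X_i; X_j \mid X_S),\]
where the inequality discards the nonnegative diagonal term $j = k$ (which contributes $H(X_k \mid X_S)$) and uses $I(X_i; X_j \mid X_S) = 0$ whenever $i$ or $j$ lies in $S$. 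Telescoping from $t = 0$ to $t = \ell - 1$ and bounding $1/(m-t) \ge 1/m$ yields
\[\sum_{t=0}^{\ell-1} \mb{E}_S \sum_{i \ne j} I(X_i; X_j \mid X_S) \le m \cdot F(0) \le m^2 \log 2.\]
Adding the trivial bound $\sum_{i \ne j} I(X_i; X_j \mid X_S) \le m(m-1) \log 2$ at $t = \ell$, dividing by $\ell + 1$, and inserting the factor of $2$ from Pinsker yields the claimed inequality, in fact with $4m^2 \log 2/(\ell+1)$ in place of $8m^2 \log 2/\ell$.

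The one mild subtlety is reconciling the $1/(m-t)$ weights that appear naturally in the telescoping with the uniform average over $t \in \{0, 1, \dots, \ell\}$ required by the lemma. Crudely replacing $1/(m-t)$ by $1/m$ loses a factor of $m$, which is the source of the $m^2/\ell$ scaling rather than a hypothetical $m/\ell$; this slack is also responsible for the loose absolute constant $8 \log 2$ in the statement, which the argument beats comfortably.
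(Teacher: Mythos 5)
Your proof is correct and is essentially the argument the paper invokes: the paper does not prove the lemma itself but obtains it by combining the Raghavendra--Tan pinning lemma (precisely the entropy-telescoping step you carry out with $F(t)=\mb{E}_{S}\sum_{j}H(X_j\mid X_S)$) with Pinsker's inequality, exactly as in your first step. Your self-contained write-up checks out, including the handling of the diagonal $j=k$ terms and the $t=\ell$ term, and in fact gives the slightly sharper bound $4m^2\log 2/(\ell+1)$.
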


Roughly speaking, the intuition behind the pinning lemma is the following: either the average (pairwise) covariance between the random variables $X_1,\dots, X_{m}$ is already small (in which case, we're done) or the average covariance is not small. In the latter case, we expect a random coordinate $X_i$ to contain substantial information about many of the other coordinates $X_1,\dots, X_{m}$, so that conditioning on a small random subset of the coordinates makes the average conditional covariance sufficiently small.  

Given \cref{lem:pinning}, we can quickly deduce \cref{thm:main} for Boolean data.

\begin{proof}[Proof of \cref{thm:main} for Boolean data]
Recall that $X$ is valued in $\{\pm 1\}^{m}/\sqrt{m}$ almost surely. Note that we may assume that $m\ge \log_2{k}$; otherwise $X$ takes on at most $2^{m} \leq k$ values, so that the sigma algebra $\mc{F}$ generated by the partition of $\{\pm 1\}^{m}/\sqrt{m}$ which assigns each point to its own part has at most $k$ parts and satisfies $Y:= \mb{E}[X \mid \mc{F}] = X$. 

Now, let $t$ be chosen uniformly from $\{0,1,\dots, \log_{2}k\}$ and let $S$ be chosen uniformly from $\binom{[m]}{t}$. This provides a decomposition of $\{\pm 1\}^{m}/\sqrt{m}$ into at most $2^{t} \leq k$ clusters, where each cluster consists of all points of $\{\pm 1\}^{m}/\sqrt{m}$ which agree on the coordinates in $S$. In other words, each cluster corresponds to a setting of $X_S := (X_i)_{i\in S} \in \{\pm 1\}^{S}/\sqrt{m}$. Let $\mc{F}$ denote the sigma algebra generated by these clusters and let $Y = \mb{E}[X \mid \mc{F}] = \mb{E}[X \mid X_{S}]$.
Let $\Sigma_X$ and $\Sigma_Y$ denote the covariance matrices of $X$ and $Y$ respectively. Then,
\begin{align*}
    \mb{E}_S \|\Sigma_X - \Sigma_Y\|_{\mr{F}} 
    &= \mb{E}_S\| \mb{E}_X (X-\mb{E}[X \mid X_S])(X-\mb{E}[X \mid X_S])^{T}\|_{\mr{F}} &\textrm{(from \cref{eqn:law-covariance})}\\
    &\le \mb{E}_S \mb{E}_{X_S}\|\mb{E}_{X \mid X_S}(X-\mb{E}[X \mid X_S])(X-\mb{E}[X \mid X_S])^{T}\|_{\mr{F}} &\textrm{(norm convexity)}\\
    & = \mb{E}_S \mb{E}_{X_S}\sqrt{\sum_{i\neq j\in [m]} \on{Cov}(X_i,X_j \mid X_S)^2  + \sum_{i\in [m]} \on{Var}(X_i \mid X_S)^2}\\
    & \le \sqrt{\mb{E}_S \mb{E}_{X_S}\bigg(\sum_{i\neq j\in [m]} \on{Cov}(X_i,X_j \mid X_S)^2  + \sum_{i\in [m]} \on{Var}(X_i \mid X_S)^2\bigg)} & \textrm{(Jensen)}\\
    & \le \sqrt{\frac{8m^2\log 2}{\log_{2}k}\cdot \frac{1}{m^2}  + m\cdot \frac{1}{m^2}} \leq \frac{3}{\sqrt{\log_2 k}},
\end{align*}
where the first term in the penultimate inequality follows by applying \cref{lem:pinning} with $\ell = \log_{2}k$ and rescaling by a factor of $m^{-2}$ (since each $X_i$ is valued in $\{\pm 1\}/\sqrt{m}$) and the second term in the penultimate inequality follows by noting that $\on{Var}(X_i \mid X_S) \leq 1/m$ (again, since $X_i \in \{\pm 1\}/\sqrt{m}$).

Finally, by Markov's inequality,
\[\mb{P}_{S}\left[\|\Sigma_{X} - \Sigma_Y\|_{\mr{F}} \geq \frac{9}{\sqrt{\log_{2}k}}\right] \leq \frac{1}{3},\]
so we have a very simple randomized algorithm for finding (with probability at least $2/3$) a sigma algebra $\mc{F}$ obtaining the desired guarantee:  first choose $t$ uniformly from $\{0,1,\dots, \log_{2}k\}$, then choose $S$ uniformly from $\binom{[m]}{t}$, and finally decompose $\{\pm 1\}^{m}/\sqrt{m}$ based on the values of the coordinates in $S$. 
\end{proof}

\section{Proof of \texorpdfstring{\cref{thm:main}}{Theorem 1.1}}
\label{sec:proof-general}

In this section, we prove \cref{thm:main} for general random vectors $X \in \mb{R}^{m}$ satisfying $\|X\|_{2} \leq 1$ almost surely. As in \cite{BSV21}, we use principal component analysis to reduce to the case where $m = c\log{k}$, for a sufficiently small absolute constant $c > 0$. However, our treatment of the dimension-reduced problem is rather different from \cite{BSV21}. Indeed, whereas \cite{BSV21} partitions the dimension-reduced random vector according to the closest point in a volumetric epsilon-net (thereby, only exploiting the information that $\|X\|_{2} \leq 1$ almost surely), our clustering scheme also takes into account the distributional profile of the dimension-reduced random vector; briefly, we place each `heavy' point into its own cluster, place nearby points, which are `collectively light' into a single cluster, and for the intermediate case, adopt a randomized rounding scheme to cluster the points. In particular, our proof provides another instance where nets based on randomized rounding provide better control than volumetric nets (see \cite{Tik20,KL20,LTV21} for some other recent examples).   

This section is organized as follows: in \cref{sec:key-estimate}, we show how to appropriately cluster points in the most challenging `intermediate' case, mentioned above (\cref{prop:key-estimate}). Given this, the proof of \cref{thm:main} is completed in \cref{sec:finish-proof} by following the aforementioned decomposition into heavy, collectively light, and intermediate cases. 

\subsection{Key estimate}
\label{sec:key-estimate}
Let $p = c\log{k}$, where $c$ is a sufficiently small positive universal constant (for instance, $c\in(0,1/120)$ is certainly sufficient). Let 
\[\gamma := \frac{e^{-(\log k)/(4p)}}{\sqrt{p}} = \frac{e^{-1/(4c)}}{\sqrt{c\log{k}}}.\]
Let $X$ be a random vector valued in $x_{0} + [-\gamma/2, \gamma/2]^{p}$, supported on finitely many points, such that for any $x \in \on{supp}(X)  =: \mc{X}$, we have $\mb{P}[X = x]\leq k^{-1/3}$. Let $\mc{W} := x_{0} + \{\pm 3\gamma/2\}^{p}$. For each $x \in \mc{X}$, let $w_{x} \in \mc{W}$ be a random vector defined as follows: $\mb{E}[(w_{x})_{i}] = x_{i}$, and the random variables $(w_{x})_{i}$ are independent. In words, the vector $w_{x}$ is obtained by randomly rounding $x$ to a point in $\mc{W}$ so that $w_x$ has mean $x$; it is easily seen that such a distribution $w_x$ is unique. Moreover, for distinct $x \in \mc{X}$, the random vectors $w_{x}$ are independent.

Now, given a realisation of the random vectors $w_{x}$, for each $w \in \mc{W}$, let $$C_{w} := \{x \in \mc{X} : w_{x} = w\},$$
so that $C_{w}$ consists of those points in $\mc{X}$ which are rounded to $w$. Let $\mc{F}$ denote the sigma-algebra corresponding to the partition $(C_{w})_{w\in \mc{W}}$. Note that $\mc{F}$ is random, depending on the realisation of $w_{x}$. 

In our analysis, we will also require the following random vector, which should be viewed as an idealised version of $\mb{E}[X \mid \mc{F}]$; this random vector, which we denote by $Z$, takes on the value
\[z_{w} := \frac{\sum_{x \in \mc{X}} x \mb{P}[w_{x} = w]\mb{P}[X=x]}{\sum_{x \in \mc{X}}\mb{P}[w_{x} = w]\mb{P}[X = x]}\]
with probability
\[q_{w} := \sum_{x \in \mc{X}}\mb{P}[w_{x} = w]\mb{P}[X = x]\]
for each $w \in \mc{W}$. We begin with the following preliminary, but key, lemma.

\begin{lemma}
\label{lem:key-estimate}
With notation as above,
\[\|\Sigma_{X} - \Sigma_{Z}\|_{\mr{F}} = \|\mb{E}[XX^{T}] - \mb{E}[ZZ^{T}]\|_{\mr{F}} \leq \frac{36e^{-1/(2c)}}{\sqrt{c\log{k}}}\]
\end{lemma}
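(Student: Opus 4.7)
The plan is to bound $A := \mb{E}[XX^{T}] - \mb{E}[ZZ^{T}]$ via the standard inequality $\snorm{A}_{\mr{F}}^{2} \le \lambda_{\max}(A)\cdot\on{tr}(A)$ for PSD matrices, and then to control the trace and operator norm separately. The first step is to identify $Z$ as a genuine conditional expectation. Under the joint law on $(X, W)$ in which $X$ is sampled from its prior and $W := w_{X}$ is its random rounding, the marginal of $W$ is exactly $(q_{w})_{w \in \mc{W}}$ and $\mb{E}[X \mid W = w] = z_{w}$, so $Z$ has the same distribution as $\mb{E}[X \mid W]$; by the law of total covariance, $A = \mb{E}[\on{Cov}(X \mid W)]$ is PSD.

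For the trace, since each $X_{i} - x_{0,i}$ has range at most $\gamma$, Popoviciu's inequality gives $\on{Var}(X_{i}) \le \gamma^{2}/4$, and hence $\on{tr}(A) \le \on{tr}(\on{Cov}(X)) \le p\gamma^{2}/4$.

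The main step is the operator-norm bound. For any unit vector $v \in \mb{R}^{p}$, the quantity $v^{T} A v = \mb{E}_{W}\on{Var}(\sang{v, X} \mid W)$ is the minimum mean-squared error for estimating $\sang{v, X}$ from $W$. Comparing with the suboptimal linear estimator $\wh{U} := \sang{v, W}$ yields
\[v^{T} A v \le \mb{E}[\sang{v, X - W}^{2}].\]
Conditional on $X$, the coordinates of $W - X$ are independent with mean zero, and each has conditional variance $(3\gamma/2)^{2} - (X_{i} - x_{0,i})^{2} \le 9\gamma^{2}/4$ since $W_{i} - x_{0,i} \in \{\pm 3\gamma/2\}$. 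Hence $\mb{E}[\sang{v, X - W}^{2}] \le (9\gamma^{2}/4)\snorm{v}_{2}^{2} = 9\gamma^{2}/4$, giving $\lambda_{\max}(A) \le 9\gamma^{2}/4$.

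Combining, $\snorm{A}_{\mr{F}} \le \sqrt{(9\gamma^{2}/4)(p\gamma^{2}/4)} = (3/4)\gamma^{2}\sqrt{p} = (3/4)e^{-1/(2c)}/\sqrt{c\log k}$, which sits comfortably inside the claimed bound. The main obstacle is the $\lambda_{\max}(A)$ estimate: the trivial chain $\snorm{A}_{\mr{F}} \le \on{tr}(A) \le p\gamma^{2}/4$ is loose by a factor of $\sqrt{p}$, and closing this gap requires recognizing $v^{T} A v$ as an MMSE and exploiting the per-coordinate conditional independence of the rounding through the explicit suboptimal estimator $\sang{v, W}$; note that the assumption $\mb{P}[X = x] \le k^{-1/3}$ plays no role in this lemma and is presumably needed only when \cref{lem:key-estimate} is combined with other ingredients in \cref{sec:finish-proof}.
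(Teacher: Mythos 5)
Your proof is correct, and it takes a genuinely different route from the paper's. You realize $Z$ as an honest conditional expectation $\mb{E}[X\mid W]$ under the annealed rounding channel (sample $X$ from its law, then set $W=w_X$), so that $A=\Sigma_X-\Sigma_Z=\mb{E}[\on{Cov}(X\mid W)]\succeq 0$, and then use the spectral bound $\snorm{A}_{\mr{F}}^2\le \lambda_{\max}(A)\on{tr}(A)$, controlling $\lambda_{\max}(A)$ by comparing the MMSE $v^TAv=\mb{E}[\on{Var}(\sang{v,X}\mid W)]$ with the suboptimal plug-in estimator $\sang{v,W}$, and the trace via Popoviciu. The paper instead works directly with the mixture representation: it writes $\Sigma_X-\Sigma_Z=\sum_{x,w}\mu_{x,w}(x-z_w)(x-z_w)^T$, splits $x-z_w=(x-w)+(w-z_w)$, uses $(u+v)(u+v)^T\preceq 2uu^T+2vv^T$ together with a Cauchy--Schwarz step (since $z_w$ is a convex combination of the points rounded to $w$) to reduce both halves to $\sum_{x,w}\mu_{x,w}(x-w)(x-w)^T$, and finally uses that $\mb{E}[(x-w_x)(x-w_x)^T]$ is diagonal with $O(\gamma^2)$ entries. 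Both arguments hinge on the same structural facts---the rounding is unbiased with independent coordinates of magnitude $O(\gamma)$, so the perturbation is effectively diagonal and has Frobenius norm $O(\gamma^2\sqrt{p})$---but your variational/spectral packaging avoids the factor-of-four loss from the triangle-inequality split and the crude diagonal estimate, yielding $(3/4)\gamma^2\sqrt{p}$ in place of the paper's $36\gamma^2\sqrt{p}$; the paper's computation is more hands-on and never needs the identification of $Z$ as a conditional expectation (though your identification is precisely the sense in which the paper calls $Z$ an ``idealised version'' of $\mb{E}[X\mid\mc{F}]$). You are also right that the assumption $\mb{P}[X=x]\le k^{-1/3}$ plays no role here; in the paper it enters only through the Hoeffding estimates in \cref{prop:key-estimate}.
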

\begin{proof}
The first equality follows from the observation that $\mb{E}[Z] = \mb{E}[X]$. We proceed to prove the inequality. For convenience of notation, let
\[\mu_{x,w} := \mb{P}[X = x]\mb{P}[w_{x} = w]\]
We have
\begin{align*}
    \|\mb{E}[XX^{T}] &- \mb{E}[ZZ^{T}]\|_{\mr{F}} = \|\sum_{x,w}\mu_{x,w}(xx^{T}-z_{w}z_{w}^{T})\|_{\mr{F}} = \|\sum_{x,w}\mu_{x,w}(xx^{T}-xz_{w}^{T}-z_{w}x^{T}+z_{w}z_{w}^{T})\|_{\mr{F}}\\
    &= \|\sum_{x,w}\mu_{x,w}(x-z_{w})(x-z_{w})^{T}\|_{\mr{F}}\\
    &\leq^{(1)} 2\|\sum_{x,w}\mu_{x,w}(x - w)(x-w)^{T}\|_{\mr{F}} + 2\|\sum_{x,w}\mu_{x,w}(w-z_{w})(w-z_{w})^{T}\|_{\mr{F}}\\
    &\leq^{(2)} 2\|\sum_{x,w}\mu_{x,w}(x - w)(x-w)^{T}\|_{\mr{F}} + 2\|\sum_{x,w}\mu_{x,w}\sum_{x' \in \mc{X}}\frac{\mu_{x',w}}{q_{w}}(w-x')(w-x')^{T}\|_{\mr{F}}\\
    &= 2\|\sum_{x,w}\mu_{x,w}(x - w)(x-w)^{T}\|_{\mr{F}} + 2\|\sum_{w} \sum_{x' \in \mc{X}}\mu_{x',w}(w-x')(w-x')^{T}\|_{\mr{F}}\\
    &= 4\|\sum_{x,w} \mu_{x,w}(x-w)(x-w)^{T}\|_{\mr{F}}\\
    &\leq 4 \max_{x \in \mc{X}}\|\mb{E}_{w_x}[(x - w_{x})(x-w_{x})^{T}] \|_{\mr{F}}\\
    &\leq^{(3)} 4\max_{x \in \mc{X}}\| 9\gamma^{2}\cdot \on{Id}_{p\times p}\|_{\mr{F}}\\
    &\leq 36\gamma^{2}\sqrt{p} = \frac{36e^{-1/(2c)}}{\sqrt{c\log{k}}},
\end{align*}
as desired. 

Inequality (1) follows since $(x+y)(x+y)^{T} \preceq 2xx^{T} + 2yy^{T}$ for any vectors $x,y$ and $0 \preceq A \preceq B$ for symmetric matrices $A,B$ implies that $\|A\|_{\mr{F}} \leq \|B\|_{\mr{F}}$. (To see this inequality note that $\|B\|_{\mr{F}}^2-\|A\|_{\mr{F}}^2 = \on{tr}(B^2-A^2) = \on{tr}((B-A)(B+A)) = \on{tr}((B+A)^{1/2}(B-A)(B+A)^{1/2}) \ge 0$.) Inequality (2) follows since for any collection of vectors $y_{1},\dots, y_{m}$ and for any $p_{1} \geq 0,\dots, p_{m} \geq 0$ such that $\sum_{i}p_{i} = 1$, we have $(\sum_{i}p_i y_i)(\sum_{i}p_i y_i)^{T} \preceq \sum_{i} p_{i}y_i y_i^{T}$, as is verified by noting that for any vector $u$,
\begin{align*}
    u^{T}(\sum_{i}p_i y_i)(\sum_{i}p_i y_i)^{T}u
    &= \left(\sum_{i}p_i (u^{T}y_i)\right)^{2}\\
    &\leq \left(\sum_{i}p_{i}\right)\cdot \left(\sum_{i}p_{i}(u^{T}y_i)^{2}\right) &\textrm{(Cauchy--Schwarz)}\\
    &= u^{T}\left(\sum_{i}p_i y_{i}y_{i}^{T}\right)u.
\end{align*}
Finally, inequality (3) uses that $\mb{E}[(w_{x})_i] = x_{i}$, the independence of $(w_{x})_i$ and $(w_{x})_{j}$, and the crude estimate $|(x-w_{x})_{i}| \leq 3\gamma$. 
\end{proof}

The following is the main result of this subsection.

\begin{proposition}
\label{prop:key-estimate}
There exists an absolute constant $K > 0$ such that for all $k\geq K$, and with $Y = \mb{E}[X \mid \mc{F}]$ (notation as above), we have
\[\|\Sigma_{X} - \Sigma_{Y}\|_{\mr{F}} = \|\mb{E}[XX^{T}] - \mb{E}[YY^{T}]\|_{\mr{F}} \leq \frac{36e^{-1/(2c)}}{\sqrt{c\log{k}}} + k^{-1/48},\]
with probability (over the realisation of $\mc{F}$) at least $1-\exp(-k^{1/7})$. 
\end{proposition}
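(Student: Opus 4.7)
The overall plan is to compare $Y$ to the idealised $Z$ via the triangle inequality: $\|\Sigma_{X}-\Sigma_{Y}\|_{\mr{F}} \le \|\Sigma_{X}-\Sigma_{Z}\|_{\mr{F}} + \|\Sigma_{Z}-\Sigma_{Y}\|_{\mr{F}}$. The first term is already handled by \cref{lem:key-estimate}, so the task reduces to showing $\|\Sigma_{Z}-\Sigma_{Y}\|_{\mr{F}} \le k^{-1/48}$ with probability at least $1-\exp(-k^{1/7})$. Since $\mb{E}[Y] = \mb{E}[Z] = \mb{E}[X]$ deterministically, this equals $\|\mb{E}[YY^{T}]-\mb{E}[ZZ^{T}]\|_{\mr{F}}$, and by shifting coordinates we may assume $x_{0}=0$; then $\|y_{w}\|_{2},\|z_{w}\|_{2}\le \gamma\sqrt{p}/2$, so $\|y_{w}y_{w}^{T}\|_{\mr{F}},\|z_{w}z_{w}^{T}\|_{\mr{F}} \le \gamma^{2}p/4 = e^{-1/(2c)}/4$.

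The key move is to threshold the bins by setting $\tau := k^{-1/24}$ and partitioning $\mc{W} = \mc{W}_{\textup{big}}\sqcup \mc{W}_{\textup{small}}$ according to whether $q_{w}\ge \tau$. For the small bins, $\sum_{\textup{small}} q_{w}\le 2^{p}\tau = k^{c\log 2 - 1/24}$, and a single Bernstein estimate applied to the weighted Bernoulli sum $\sum_{\textup{small}}p_{w} = \sum_{x}\mb{P}[X{=}x]\mbm{1}[w_{x}\in \mc{W}_{\textup{small}}]$ (each summand at most $k^{-1/3}$) shows this aggregate concentrates around its mean, giving $\sum_{\textup{small}}p_{w} = O(2^{p}\tau)$ with high probability; the small-bin contribution to $\|\mb{E}[YY^{T}]-\mb{E}[ZZ^{T}]\|_{\mr{F}}$ is therefore $O(\gamma^{2}p \cdot 2^{p}\tau) \ll k^{-1/48}$ for $c\le 1/120$. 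For the big bins, write $a_{w} := \sum_{x}x\,\mb{P}[X{=}x]\mbm{1}[w_{x}=w]$, so $\mb{E}a_{w} = q_{w}z_{w}$ and $\mb{E}p_{w} = q_{w}$; the plan is to apply Bernstein's inequality bin-by-bin (and coordinate-by-coordinate for $a_{w}$) to obtain
\[|p_{w}-q_{w}|\lesssim k^{-2/21}\sqrt{q_{w}}, \qquad |a_{w,i}-q_{w}z_{w,i}| \lesssim \gamma k^{-2/21}\sqrt{q_{w}}\]
with failure probability at most $\exp(-2k^{1/7})$ each, using the coordinate bounds $k^{-1/3}$ and $\gamma k^{-1/3}/2$ respectively. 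A union bound over at most $|\mc{W}_{\textup{big}}|\cdot(p+1) \le k^{1/24}\cdot O(\log k)$ events keeps the overall failure probability below $\exp(-k^{1/7})$.

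To aggregate the big-bin pieces, I would invoke the identity
\[p_{w}y_{w}y_{w}^{T} - q_{w}z_{w}z_{w}^{T} = (p_{w}-q_{w})\,y_{w}y_{w}^{T} + q_{w}\bigl[(y_{w}-z_{w})y_{w}^{T} + z_{w}(y_{w}-z_{w})^{T}\bigr],\]
together with $|y_{w,i}-z_{w,i}| \le |a_{w,i}-q_{w}z_{w,i}|/p_{w} + |z_{w,i}|\,|p_{w}-q_{w}|/p_{w} \lesssim \gamma k^{-2/21}/\sqrt{q_{w}}$ (which uses $p_{w}\ge q_{w}/2$, itself a consequence of the first Bernstein bound). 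Every resulting Frobenius term carries a factor of $\sqrt{q_{w}}$, so summing via Cauchy--Schwarz $\sum_{\textup{big}}\sqrt{q_{w}} \le \sqrt{|\mc{W}_{\textup{big}}|}\le 2^{p/2}$ produces a big-bin contribution of order $\gamma^{2}p\cdot k^{-2/21+c\log 2/2}$, which is well below $k^{-1/48}$ for $c \le 1/120$. The main technical obstacle is calibrating $\tau$ correctly: it must exceed $k^{-4/21}$ so that Bernstein operates in its subgaussian regime on each big bin (i.e., the $\sqrt{q_{w}}$-scale fluctuations are small compared to $q_{w}$), yet also keep $2^{p}\tau$ polynomially below $k^{-1/48}$ so that the small-bin contribution is negligible. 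The choice $\tau = k^{-1/24}$ threads this needle precisely because $p = c\log k$ is kept tiny by the constraint $c\le 1/120$, making $2^{p}$ a sub-logarithmic power of $k$.
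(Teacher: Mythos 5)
Your proposal is correct in substance and reaches the stated estimate, but it executes the key concentration step differently from the paper. After the common reduction (triangle inequality through $Z$, then concentrating $p_w$ and the unnormalized bin sums $a_w$ with a union bound over $w\in\mc{W}$ and $i\in[p]$), the paper exploits a structural fact you did not use: since each $x\in\mc{X}$ lies in $x_0+[-\gamma/2,\gamma/2]^p$ and the rounding targets are $x_0\pm 3\gamma/2$, every coordinate rounding probability lies in $[1/3,2/3]$, so \emph{every} bin satisfies $q_w\ge 3^{-p}\ge k^{-3c/2}$. Consequently a single Hoeffding bound with absolute deviation $k^{-1/12}$ per bin already yields the uniform relative error $p_w=q_w(1\pm k^{-1/24})$, and no big/small decomposition is needed; in fact, with your threshold $\tau=k^{-1/24}$ and $c<1/120$, the set $\mc{W}_{\textup{small}}$ is empty, so that branch of your argument is vacuous in this setting (though harmless). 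Your route --- thresholding on $q_w$, an aggregate Bernstein bound for the light bins, per-bin Bernstein at scale $\sqrt{q_w}$ for the heavy ones, and Cauchy--Schwarz over $\sum_{\textup{big}}\sqrt{q_w}\le 2^{p/2}$ --- is more robust, since it never needs a pointwise lower bound on $q_w$, and it actually gives a smaller error term than $k^{-1/48}$; the paper's route is shorter because the lower bound on $q_w$ lets crude absolute-deviation Hoeffding suffice. Two small calibration points: your decomposition identity $p_wy_wy_w^T-q_wz_wz_w^T=(p_w-q_w)y_wy_w^T+q_w[(y_w-z_w)y_w^T+z_w(y_w-z_w)^T]$ and the bound $|y_{w,i}-z_{w,i}|\lesssim\gamma k^{-2/21}/\sqrt{q_w}$ check out, but the per-bin Bernstein bound for $|p_w-q_w|$ at deviation exactly $k^{-2/21}\sqrt{q_w}$ gives exponent about $k^{1/7}/2$, not $2k^{1/7}$; you need the implicit constant in your ``$\lesssim$'' to be at least $2$ (there is ample slack for this), or else the final probability degrades to $1-\exp(-ck^{1/7})$ for some $c<1$, which would still suffice for the application in the proof of \cref{thm:main} but is not literally the stated bound.
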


\begin{proof}
Without loss of generality we may assume that $x_0 = 0$. By \cref{lem:key-estimate} and the triangle inequality, it suffices to show that for all sufficiently large $k$, except with probability at most $\exp(-k^{1/7})$,
\[\|\mb{E}[ZZ^{T}] - \mb{E}[YY^{T}]\|_{\mr{F}} \leq k^{-1/48}.\]
For convenience of notation, for $w \in \mc{W}$ let
\[p_{w} := \sum_{x \in \mc{X}}\mb{P}[X = x]\mbm{1}[w_{x} = w]\]
and for $w \in \mc{W}$, $i \in [p]$, let
\[(y_{w})_i := \sum_{x\in \mc{X}}x_i \mb{P}[X=x]\mbm{1}[w_{x}=w].\]
By Hoeffding's inequality, for a given $w \in \mc{W}$,
\begin{align*}
    \mb{P}\left[|p_{w} - q_{w}| \geq k^{-1/12} \right] 
    &\leq \exp(-2k^{-1/6}/\sum_{x}\mb{P}[X=x]^{2})\\
    &\leq^{(1)} \exp(-2k^{-1/6}/k^{-1/3})\\
    & = \exp(-2k^{1/6}),
\end{align*}
where inequality (1) uses $\sum_{x}\mb{P}[X=x]^{2} \leq \max_{x}\mb{P}[X=x] \leq k^{-1/3}$, by assumption. Similarly, for a given $w \in \mc{W}$ and $i \in [p]$, we have 
\begin{align*}
    \mb{P}\left[|(y_{w})_i - q_{w}\cdot (z_w)_{i}| \geq \gamma \cdot k^{-1/12} \right] 
    &\leq \exp(-2\gamma^{2}k^{-1/6}/\sum_{x}x_{i}^{2}\mb{P}[X=x]^{2})\\
    & \leq \exp(-2k^{1/6}).
\end{align*}

Let $\mc{E}$ denote the event that $|\sum_{x \in \mc{X}}\mb{P}[X = x]\mbm{1}[w_{x} = w] - q_{w}| \leq k^{-1/12}$ and $|(y_{w})_i - q_{w}\cdot (z_w)_{i}| \leq \gamma \cdot k^{-1/12}$ for all $w \in \mc{W}, i \in [p]$. By the preceding discussion,
\[\mb{P}[\mc{E}^{c}] \leq 2\cdot 2^{p}\cdot p\cdot \exp(-2k^{1/6}) \leq \exp(-k^{1/7})\]
for all sufficiently large $k$.  Moreover, for every $i \in [p]$, $x \in \mc{X}$, and $\epsilon \in \{\pm 1\}$,  we have $\mb{P}[(w_{x})_i = \epsilon \cdot 3\gamma/2] \geq 1/3$, so that for every $w \in \mc{W}$,
\begin{align*}
    q_{w} \geq 3^{-p} \geq k^{-3c/2},
\end{align*}
and hence, on the event $\mc{E}$, we have for all ${w} \in \mc{W}$ that
\[p_{w} = q_{w} \pm k^{-1/12} = q_{w}(1 \pm k^{-1/24}),\]
assuming that $c < 1/36$. Finally, we see that on the event $\mc{E}$, 
\begin{align*}
    \|\mb{E}[ZZ^{T}] - \mb{E}[YY^{T}]\|_{\mr{F}}
    &= \|\sum_{w \in \mc{W}}(q_{w}z_{w}z_{w}^{T} - y_{w}y_{w}^{T}/p_{w})\|_{\mr{F}}\\
    &\leq \|\sum_{w} (q_{w} z_{w} - y_{w})z_{w}^{T} \|_{\mr{F}} + \|\sum_{w}y_{w}(z_{w}^{T} - y_{w}^{T}/p_{w})\|_{\mr{F}}\\
    &\leq 2^{p}\cdot (\max_{w}\|q_{w}z_{w}-y_{w}\|_{2}\|z_{w}\|_{2} + \max_{w}q_{w}^{-1}\|y_{w}\|_{2}\|q_{w}z_{w}^{T} - y_{w}\cdot q_{w}/p_{w}\|_{2})\\
    &\leq 2^{p}\left(\gamma^{2}p\cdot k^{-1/12} + k^{3c/2}\gamma\sqrt{p}\max_{w}(\|q_{w}z_{w}^{T} - y_{w}\|_{2} + \|y_{w}(1-q_{w}/p_{w})\|_{2})\right)\\
    &\leq \gamma^{2}pk^{c}\cdot k^{-1/12} + k^{5c/2}\gamma^{2}{p}\cdot k^{-1/12} + k^{5c/2}\gamma^{2}p\cdot \max_{w}|1-q_{w}/p_{w}|\\
    &\leq 3\cdot k^{5c/2}\gamma^{2}p\cdot k^{-1/24} \leq k^{-1/48},
\end{align*}
provided that $c < 1/120$.
\end{proof}

\subsection{Finishing the proof}
\label{sec:finish-proof}
With \cref{prop:key-estimate}, we are ready to prove \cref{thm:main} through a sequence of reductions. Recall that in the statement of \cref{thm:main}, $X$ is a random vector valued in $\mb{R}^{m}$ which satisfies $\|X\|_{2}\leq 1$ almost surely. Without loss of generality, we may assume that $X$ is finitely supported, by rounding the points in the support to a sufficiently fine $\epsilon$-net with respect to the Euclidean metric (see, e.g., \cite[Lemma~3.6]{BSV21}). 

Next, we show that it suffices to assume that $X$ is valued in $\mb{R}^{p}$, for $p = c\log{k}$, where $c$ is as in \cref{sec:key-estimate}. The following lemma is a slight modification of \cite[Lemmas~3.2,3.3]{BSV21}. 

\begin{lemma}
\label{lem:PCA}
Suppose that $X$ is a random vector with $\|X\|_{2} \leq 1$ almost surely. Let $S = \mb{E}[XX^T]$ and let $P$ the projection onto the subspace corresponding to the largest $t \geq 1$ eigenvectors of $S$. Let $Y = \mb{E}[X \mid PX]$. Then, 
\[\|\Sigma_{X} - \Sigma_Y\|_{\mr{F}} = \snorm{\mb{E}[XX^T]-\mb{E}[YY^T]}_{\mr{F}}\le \frac{1}{\sqrt{t}}.\]
\end{lemma}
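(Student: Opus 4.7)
The plan is to combine the law of total covariance with a reduction onto the orthogonal complement of $\on{range}(P)$, followed by a standard Loewner-monotonicity argument for the Frobenius norm. The equality $\|\Sigma_X - \Sigma_Y\|_{\mr{F}} = \|\mb{E}[XX^T] - \mb{E}[YY^T]\|_{\mr{F}}$ is immediate, since the tower property gives $\mb{E}[Y] = \mb{E}[X]$ and so the rank-one correction terms cancel.

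For the inequality, I begin with \cref{eqn:law-covariance} to write $\Sigma_X - \Sigma_Y = \mb{E}[(X-Y)(X-Y)^T]$. The key observation is that since $PX$ is $\sigma(PX)$-measurable, $PY = \mb{E}[PX \mid PX] = PX$, so $X - Y$ lies in $\on{range}(I-P)$; explicitly, $X - Y = W$ where $W := (I-P)X - \mb{E}[(I-P)X \mid PX]$. Setting $V := (I-P)X$ and $U := \mb{E}[V \mid PX]$, the tower property gives $\mb{E}[VU^T] = \mb{E}[UU^T] = \mb{E}[UV^T]$, and hence
\[\mb{E}[WW^T] = \mb{E}[VV^T] - \mb{E}[UU^T] = (I-P)S(I-P) - \mb{E}[UU^T] \preceq (I-P)S(I-P).\]

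Now $P$ projects onto the span of the top $t$ eigenvectors of $S$, so $P$ commutes with $S$, and $(I-P)S(I-P)$ is positive semidefinite with eigenvalues $\lambda_{t+1}\ge \lambda_{t+2}\ge \cdots \ge \lambda_m \ge 0$ (plus $t$ zeros), where $\lambda_1\ge\cdots\ge\lambda_m$ are the eigenvalues of $S$. Applying the fact (verified in the proof of \cref{lem:key-estimate}) that $0\preceq A\preceq B$ implies $\|A\|_{\mr{F}}\le \|B\|_{\mr{F}}$, I obtain
\[\|\Sigma_X - \Sigma_Y\|_{\mr{F}} \le \|(I-P)S(I-P)\|_{\mr{F}} = \Bigl(\sum_{i>t}\lambda_i^2\Bigr)^{1/2}.\]

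To finish, I use the hypothesis $\|X\|_2\le 1$ almost surely, which gives $\sum_i \lambda_i = \on{tr}(S) = \mb{E}\snorm{X}_2^2 \le 1$. In particular, $(t+1)\lambda_{t+1} \le \sum_{i\le t+1}\lambda_i \le 1$, so $\lambda_{t+1} \le 1/(t+1)$, and therefore
\[\sum_{i>t}\lambda_i^2 \le \lambda_{t+1} \sum_{i>t}\lambda_i \le \frac{1}{t+1} \le \frac{1}{t},\]
which yields the claimed bound $1/\sqrt{t}$. There is no serious obstacle: the entire argument is routine manipulation with the Loewner order, and the only subtle point is recognizing that $PY = PX$, which confines $X-Y$ to the small-eigenvalue subspace of $S$ and converts the problem into the elementary tail bound $\sum_{i>t}\lambda_i^2 \le 1/t$ under the trace constraint $\on{tr}(S)\le 1$.
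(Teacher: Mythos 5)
Your proof is correct, but it is organized around a different mechanism than the paper's. The paper's proof of \cref{lem:PCA} is essentially a citation: it splits $\|A\|_{\mr{F}} \leq \|PAP\|_{\mr{F}} + \|(I-P)\mb{E}[XX^T](I-P)\|_{\mr{F}}$ using the norm-decomposition from the proof of \cite[Lemma~3.2]{BSV21}, bounds the second term by $1/\sqrt{t}$ via \cite[Lemma~3.3]{BSV21}, and kills the first term by the observation $PY = P\mb{E}[X\mid PX] = PX$. You use the same two structural ingredients --- the identity $PY = PX$, which confines the covariance loss to $\on{range}(I-P)$, and the spectral tail bound coming from $\on{tr}(S) \le 1$ --- but you replace the norm-splitting step by a single Loewner domination: writing $X - Y = (I-P)X - \mb{E}[(I-P)X\mid PX]$ and using the tower property to get $\mb{E}[(X-Y)(X-Y)^T] \preceq (I-P)S(I-P)$, then invoking monotonicity of the Frobenius norm on the positive semidefinite order (the same fact verified inside the proof of \cref{lem:key-estimate}), together with \cref{eqn:law-covariance}. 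Your route buys a fully self-contained argument: you rederive rather than cite the content of \cite[Lemmas~3.2, 3.3]{BSV21}, including the elementary tail estimate $\sum_{i>t}\lambda_i^2 \le \lambda_{t+1}\sum_{i>t}\lambda_i \le 1/(t+1)$, and it avoids any need to control the off-diagonal block $PA(I-P)$, which is what the cited norm-splitting lemma handles. The paper's version is shorter on the page only because it outsources those steps; conceptually the two proofs agree on where the covariance loss lives and why it is small. All the intermediate claims you make (the cancellation of the mean terms from $\mb{E}Y = \mb{E}X$, the identities $\mb{E}[VU^T]=\mb{E}[UU^T]$, the commutation of $P$ with $S$, and the eigenvalue description of $(I-P)S(I-P)$) check out.
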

\begin{proof}
The equality holds since $\mb{E}[X] = \mb{E}[Y]$. For the inequality, we note that, with $A := \mb{E}[XX^{T}] - \mb{E}[YY^{T}]$,
\begin{align*}
    \|A\|_{\mr{F}} &\leq^{(1)} \|PAP\|_{\mr{F}} + \|(I-P)\mb{E}[XX^T](I-P)\|_{\mr{F}}\\
    &\leq^{(2)}\|PAP\|_{\mr{F}} + \frac{1}{\sqrt{t}}\\
    &= \|\mb{E}(PX-PY)(PX-PY)^{T}\|_{\mr{F}} + \frac{1}{\sqrt{t}}\\
    &=^{(3)} \frac{1}{\sqrt{t}},
\end{align*}
where (1) follows from the proof of \cite[Lemma~3.2]{BSV21}, (2) follows from \cite[Lemma~3.3]{BSV21}, and (3) follows since $PY = P\mb{E}[X \mid PX] = PX$. 
\end{proof}
By taking $t = c\log{k}$ in \cref{lem:PCA} and using the triangle inequality, we see that it suffices to prove \cref{thm:main} for $X \in \mb{R}^{p}$, with $p = c\log{k}$ (the clustering in the original problem corresponds to applying the map $P^{-1}$ to the clustering in the dimension-reduced problem). Therefore, consider such an $X$, and recall that we may assume that $X$ is finitely supported, denoting the support by $\mc{X}$. Let
\[\mc{X}^{(1)} = \{x \in \mc{X} : \mb{P}[X=x] \geq 3/k\}.\]
Note that $|\mc{X}^{(1)}| \leq k/3$. By assigning each point in $\mc{X}^{(1)}$ to its own cluster, it suffices to find a clustering of the points in $\mc{X} \setminus \mc{X}^{(1)}$ into fewer than $2k/3$ clusters. 

For this, we begin by writing $\mc{B} := \{x\in\mb{R}^p: \|x\|_{2} \leq 1\}$ as a disjoint union of cubes, denoted by $\mf{C}$, each with side length $\gamma = e^{-\log{k}/(4p)}/\sqrt{p}$. By a standard volumetric estimate (see, e.g., \cite[Proposition~3.7]{BSV21}), the number of cubes in $\mf{C}$ is at most $k^{1/3}$ (if $c < 1/120$, say, and $k$ is sufficiently large). Therefore, it suffices to cluster the points in each cube into at most $(2/3)k^{2/3}$ clusters. We have two cases: 
\begin{itemize}
    \item Case I: $\mc{C} \in \mf{C}$ satisfies $\mb{P}[X \in \mc{C}\setminus \mc{X}^{(1)}] \leq k^{-1/2}$. Let us denote all such cubes by $\mf{C}_{1}$. In this case, we assign all the points in $\mc{C} \setminus \mc{X}^{(1)}$ to a single cluster (say, corresponding to the midpoint of $\mc{C}$).   
    \item Case II: $\mc{C} \in \mf{C}$ satisfies $\mb{P}[X \in \mc{C}\setminus \mc{X}^{(1)}] \geq k^{-1/2}$.  Let us denote all such cubes by $\mf{C}_{2}$. In this case, consider the random vector $X_{\mc{C}}$, which takes on each value $x \in \mc{C}\setminus \mc{X}^{(1)}$ with probability $\mb{P}[X=x]/\mb{P}[X \in \mc{C}\setminus \mc{X}^{(1)}]$. Note that $X_{\mc{C}}$ is supported on a $p$-dimensional cube of side length $\gamma$, and for any $x \in X_{\mc{C}}$, we have that $\mb{P}[X_{\mc{C}} = x] \leq (3/k)/k^{-1/2} \leq 3k^{-1/2} \leq k^{-1/3}$. We partition the points in $\mc{C}\setminus \mc{X}^{(1)}$ according to the clusters coming from \cref{prop:key-estimate} applied to $X_{\mc{C}}$, noting that there are at most $2^{p} < k^{1/2}$ clusters for each cube $\mc{C} \in \mf{C}_{2}$ (provided that $c < 1/2$). Denote the corresponding sigma algebra by $\mc{F}_{\mc{C}}$.  
\end{itemize}

At this point, we have partitioned the points in $\mc{X}$ into at most $k/3 + k^{1/3}\cdot k^{1/2} < k/2$ clusters. To complete the proof, we check that the sigma algebra $\mc{F}$ generated by this clustering satisfies the conclusion of \cref{thm:main}. Letting $Y = \mb{E}[X \mid \mc{F}]$, we have
\begin{align*}
    \|\Sigma_{X} - \Sigma_Y \|_{\mr{F}} 
    &= \|\mb{E}[XX^T] - \mb{E}[YY^T]\|_{\mr{F}}\\
    &\leq \sum_{\mc{C} \in \mf{C}_1}\mb{P}[X \in \mc{C}\setminus \mc{X}^{(1)}]\cdot \gamma^{2}p + \sum_{\mc{C} \in \mc{C}_2}\mb{P}[X \in \mc{C} \setminus \mc{X}^{(1)}]\cdot \|\Sigma_{X_{\mc{C}}} - \Sigma_{\mb{E}[X_{\mc{C}} \mid \mc{F}_{\mc{C}}]}\|_{\mr{F}} \\
    &\leq k^{1/3}\cdot k^{-1/2} + \sum_{C \in \mc{C}_{2}}\mb{P}[X \in \mc{C} \setminus \mc{X}^{(1)}]\cdot \left(\frac{36e^{-1/(2c)}}{\sqrt{c\log{k}}}+k^{-1/48}\right) \quad \quad \textrm{(\cref{prop:key-estimate})}\\
    &\leq \frac{40}{\sqrt{c\log{k}}},
\end{align*}
provided that $c < 1/120$ and $k$ is sufficiently large.

\bibliographystyle{amsplain0.bst}
\bibliography{main.bib}

\end{document}